\newtheorem{thm}{Theorem}
\newtheorem{lem}[thm]{Lemma}
\newtheorem{prop}[thm]{Proposition}  
\theoremstyle{remark}
\theoremstyle{definition}
\newtheorem{dfn}[thm]{Definition}
\newtheorem{rem}[thm]{Remark} 
\newtheorem{egs}[thm]{Examples}
\newtheorem{eg}[thm]{Example}
\newtheorem{q}[thm]{Question} 
\newtheorem{def/prop}[thm]{Definition/Proposition}
\numberwithin{equation}{section}
\newcommand{\rs}[1]{} \newcommand{\rma}[1]{}
\def\Ker{\mathop{\mathrm{Ker}}\nolimits}
\def\Hom{\mathop{\mathrm{Hom}}\nolimits}
\def\Gal{\mathop{\mathrm{Gal}}\nolimits}
\newcommand{\bb}[1]{{\mathbb{#1}}}
\newcommand{\mca}[1]{{\mathcal{#1}}}
\newcommand{\inj}{\hookrightarrow}
\newcommand{\surj}{\twoheadrightarrow}
\newcommand{\congto}{\overset{\cong}{\to}}
\newcommand{\imp}{\Longrightarrow}
\newcommand{\N}{\bb{N}}
\newcommand{\Z}{\bb{Z}}
\newcommand{\Q}{\bb{Q}}
\newcommand{\R}{\bb{R}}
\newcommand{\ol}{\overline}
\newcommand{\ds}{\displaystyle}
\newcommand{\wt}[1]{{\widetilde{#1}}}
\DeclareMathOperator*{\restprod}%
 {\mathchoice{\ooalign{\ensuremath{\displaystyle\prod}\crcr\ensuremath{\displaystyle\coprod}}}%
             {\ooalign{\ensuremath{\textstyle\prod}\crcr\ensuremath{\textstyle\coprod}}}%
             {\ooalign{\ensuremath{\scriptstyle\prod}\crcr\ensuremath{\scriptstyle\coprod}}}%
             {\ooalign{\ensuremath{\scriptscriptstyle\prod}\crcr\ensuremath{\scriptscriptstyle\coprod}}}%
 }
\title[Chebotarev links are stably generic]
{Chebotarev links are stably generic}
\author{Jun Ueki}
\email{uekijun46@gmail.com}
\address{Department of Mathematics, School of System Design and Technology, Tokyo Denki University\\ 5 Senju Asahi-cho, Adachi-ku, 120-8551, Tokyo, Japan} 
\date{\today}
\subjclass[2010]{Primary 11N05, 57M25, Secondary 11R37, 57M12}
\keywords{knot, 3-manifold, pseudo-Anosov flow, idelic class field theory} 
\begin{document}

\begin{abstract} 
We discuss the relationship between two analogues in a 3-manifold of the set of prime ideals in a number field. 
We prove that if $(K_i)_{i\in \N_{>0}}$ is a sequence of knots obeying the Chebotarev law in the sense of Mazur and McMullen, then $\mca{K}=\cup_i K_i$ is a stably generic link in the sense of Mihara. 
An example we investigate is the planetary link of a fibered hyperbolic finite link in $S^3$. 
We also observe a Chebotarev phenomenon of knot decomposition in a degree 5 non-Galois subcover of an $A_5$(icosahedral)-cover. 
\end{abstract}

\maketitle 


\section{Introduction}

In this article we discuss the relationship between two analogues in a 3-manifold of the set of rational prime numbers. 
We prove that if $(K_i)_{i\in \N_{>0}}$ is a sequence of knots obeying the Chebotarev law in the sense of B.~Mazur and C.~T.~McMullen, then $\mca{K}=\cup_i K_i$ is a stably generic link in the sense of T.\ Mihara. 

We assume that any 3-manifold is the complement of a finite link in an oriented connected closed 3-manifold. 
A knot $K$ in a 3-manifold $M$ means a tame embedding $S^1=\R/\Z\inj M$ or its image with a natural orientation. A link is a countable (finite or infinite) set of disjoint knots. 
For any manifold $X$, we denote the interior of $X$ by ${\rm Int}X$.

McMullen \cite{McMullen2013CM} established a version of the Chebotarev density theorem in which number fields are replaced by 3-manifolds, answering to Mazur's question on the existence of Chebotarev arrangement of knots in \cite{Mazur2012}. 
Their definition is described as follows. 

\begin{dfn}[(Chebotarev law)] \label{defCheb}
Let $(K_i)=(K_i)_{i\in \N_{>0}}$ be a sequence of disjoint knots in a 3-manifold $M$. 
For each $n\in \N_{>0}$ and $j>n$, we put $L_n=\cup_{i\leq n}K_i$ 
and denote the conjugacy class of $K_j$ in $\pi_1(M-L_n)$ by $[K_j]$. 
We say that $(K_i)$ \emph{obeys the Chebotarev law} if 
$$\lim_{\nu \to \infty} \frac{\#\{n<j\leq \nu\mid \rho([K_j])=C\}}{\nu}=\frac{\#C}{\#G}$$
holds for any $n\in \N_{>0}$, any surjective homomorphism $\rho:\pi_1(M-L_n)\to G$ to any finite group, and any conjugacy class $C\subset G$. 
(The left hand side is \emph{the natural density} of $K_i$'s with $\rho([K_j])=C$.)

An infinite link $\mca{K}$ is said to be \emph{Chebotarev} if it obeys the Chebotarev law with respect to some order. 
\end{dfn} 

On the other hand, Mihara \cite{Mihara2019Canada} formulated an analogue of idelic class field theory for 3-manifolds by introducing certain infinite links called stably generic links, refining the notion of very admissible links given by Niibo and the author \cite{Niibo1, NiiboUeki}, and gave a cohomological interpretation of our previous formulation. 
Here we describe the definition of a stably generic link, only using ordinary terminology of low dimensional topology: 

\begin{dfn}[(stably generic link)] \label{defsg} 
Let $M$ be a 3-manifold and $\mca{K}\neq \emptyset$ a link. The link $\mca{K}$ is said to be \emph{generic} if for any finite sublink $L$ of $\mca{K}$, the group $H_1(M-L)$ is generated by components of $\mca{K}-L$. 
The link $\mca{K}$ is said to be \emph{stably generic} if for any finite sublink $L$ of $\mca{K}$ and for any finite branched cover $h:M'\to M$ branched over $L$, the preimage $h^{-1}(\mca{K})$ is again a generic link of $M'$. 
\end{dfn} 

Now we present our main theorem of this article: 
\begin{thm} \label{thmChebSG} 
Let $(K_i)$ be a sequence of disjoint knots in a 3-manifold $M$ obeying the Chebotarev law. Then the link $\mca{K}=\cup_i K_i$ is a stably generic link. 
\end{thm}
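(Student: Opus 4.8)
The plan is to reduce genericity to a statement about cyclic quotients and then feed a single, carefully chosen finite quotient into the Chebotarev law. First I record two reductions. \emph{(i)} For a finitely generated abelian group $A$ (such as the first homology of any of our link complements) and a subset $S\subseteq A$, the subgroup $\langle S\rangle$ equals $A$ if and only if no surjection $A\surj\Z/m$ with $m>1$ annihilates $S$; this is the criterion I will verify. \emph{(ii)} It suffices to test genericity on the cofinal family of finite sublinks $L_n=\cup_{i\le n}K_i$: if $L\subseteq L_n$, then filling back the knots of $L_n\setminus L$ realizes $H_1(M-L)$ as a quotient of $H_1(M-L_n)$ (the kernel is generated by the corresponding meridians, by Mayer--Vietoris), under which the classes of components of $\mca K-L_n$ map to classes of components of $\mca K-L$. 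The same filling argument applies verbatim in any cover. As a warm-up, genericity of $\mca K$ in $M$ itself is immediate: a surjection $\psi\colon H_1(M-L_n)\surj\Z/m$ annihilating every $[K_j]$, $j>n$, composed with $\pi_1\to H_1$, would be a $\rho$ as in Definition~\ref{defCheb} for which the density of a nontrivial class is forced to be $0$ rather than $1/m$, a contradiction.

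For the branched case, fix a finite sublink $L$ and a finite branched cover $h\colon M'\to M$ branched over $L$; I may assume $M'$ connected (otherwise argue componentwise). Given a finite sublink $L'$ of $h^{-1}(\mca K)$, choose $n$ large enough that $L\subseteq L_n$ and $L'\subseteq h^{-1}(L_n)$. By reduction \emph{(ii)} applied in $M'$ it is enough to prove that $H_1(M'_n)$ is generated by the components of $h^{-1}(\mca K)-h^{-1}(L_n)$, where $M'_n:=M'-h^{-1}(L_n)$. Since the branch locus lies in $L\subseteq L_n$, the restriction $h\colon M'_n\to M_n:=M-L_n$ is an honest finite cover, corresponding to a finite-index subgroup $\Gamma'\le\Gamma:=\pi_1(M_n)$ with $H_1(M'_n)=(\Gamma')^{\mathrm{ab}}$.

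Now suppose, for contradiction, that a surjection $\psi\colon(\Gamma')^{\mathrm{ab}}\surj\Z/m$, $m>1$, annihilates all the relevant components. Form $G^*:=\Gamma/K$, where $K$ is the normal core in $\Gamma$ of $\ker\psi$; this is a finite group, and, writing $\rho^*\colon\Gamma\surj G^*$ for the projection and $H^*:=\Gamma'/K\le G^*$, the map $\psi$ descends to a surjection $\bar\psi\colon H^*\surj\Z/m$. The key computation is that, for each $j>n$ with $x_j:=[K_j]$, the components of $h^{-1}(K_j)$ correspond to the orbits of $\langle x_j\rangle$ on the coset space $\Gamma'\backslash\Gamma$, an orbit through $\Gamma'\gamma$ of length $d$ contributing the class $[\gamma x_j^{\,d}\gamma^{-1}]\in(\Gamma')^{\mathrm{ab}}$; all of these data, and hence whether $\psi$ kills every component of $h^{-1}(K_j)$, depend only on the Frobenius $\rho^*(x_j)\in G^*$. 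Thus ``$\psi$ kills all components of $h^{-1}(K_j)$'' cuts out a conjugation-invariant subset of $G^*$, a union of conjugacy classes I call $\psi$-null. Evaluating at the identity coset shows that every $g\in H^*$ with $\bar\psi(g)\ne0$ is \emph{not} $\psi$-null; such $g$ exist because $\bar\psi$ is onto $\Z/m$ with $m>1$. Applying the Chebotarev law to $\rho^*$ and the conjugacy class $C_0$ of one such $g$ yields density $\#C_0/\#G^*>0$ of indices $j>n$ with $\rho^*(x_j)\in C_0$; for each of these some component of $h^{-1}(K_j)$ escapes $\ker\psi$, contradicting the assumption. Hence no such $\psi$ exists and $h^{-1}(\mca K)$ is generic, so $\mca K$ is stably generic.

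The main obstacle is the key computation above: identifying the homology classes in $H_1(M'_n)=(\Gamma')^{\mathrm{ab}}$ of the preimage components of each $K_j$ in the possibly \emph{non-Galois} cover $M'_n\to M_n$, and checking that the resulting ``$\psi$-null'' condition really is conjugacy-invariant. I expect the Galois case (where $\Gamma'\trianglelefteq\Gamma$ and the components form a single orbit of $[x_j^{\,d}]$ under $G=\Gamma/\Gamma'$) to be transparent, with the non-Galois case handled uniformly through the same normal-core group $G^*$, since only the Frobenius conjugacy class of $x_j$ enters. A secondary point requiring care is the bookkeeping of the reductions, so that ``branched over $L$'' genuinely lets one work over the unbranched cover $M'_n$, which is why $n$ is chosen with $L\subseteq L_n$.
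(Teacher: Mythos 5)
Your proposal is correct, and at its core it runs on the same engine as the paper's proof: reduce to the honest finite cover $M'_n\to M_n$ over $M_n=M-L_n$ by the filling (Mayer--Vietoris) argument, pass to a finite quotient of $\pi_1(M_n)$ by taking a normal core, and apply the Chebotarev law \emph{downstairs} to a conjugacy class meeting the subgroup that corresponds to the cover. The packaging, though, is genuinely different. The paper factors the argument through an intermediate notion, that of a \emph{weakly Chebotarev} link (every conjugacy class in every finite quotient of $\pi_1$ is realized by some component), proves that this property is inherited under finite unbranched covers (Lemma \ref{WCihnerit}, resting on the path-lifting Lemma \ref{1:1}), and only at the end specializes to cyclic quotients to get homology generation (Lemma \ref{WCG}). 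You instead run a single contradiction argument specialized from the start to a cyclic quotient $\psi$ of $H_1(M'_n)$; your ``key computation'' --- components of $h^{-1}(K_j)$ correspond to $\langle x_j\rangle$-orbits on $\Gamma'\backslash\Gamma$, the orbit through $\Gamma'\gamma$ of length $d$ carrying the class of $\gamma x_j^{\,d}\gamma^{-1}$ --- is precisely the content of the paper's Proposition \ref{Artin} (Artin's decomposition-type argument, proved in Section 5 for a different purpose), and the conjugation-invariance of your ``$\psi$-null'' condition follows from it exactly as you assert. Note that your identity-coset element ($g\in H^*$ with $\bar\psi(g)\neq 0$, giving a length-one orbit, hence a component mapping bijectively to $K_j$) is the same degree-one component that the paper extracts via Lemma \ref{1:1}\,(2). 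What your route buys is economy: no intermediate notion, only cyclic quotients, one self-contained argument. What the paper's route buys is a stronger, reusable statement --- the preimage of a weakly Chebotarev link under any finite cover is again weakly Chebotarev, i.e.\ \emph{every} conjugacy class of \emph{every} finite quotient upstairs is realized by a component --- which has independent interest beyond the homological conclusion needed for stable genericity.
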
 

We first carefully observe the behavior of knots in a finite cover which is not necessarily Galois and prove Lemma \ref{1:1} in Section 2. Afterward, we prove Theorem \ref{thmChebSG} in Section 3.

We remark that the set of prime ideals of the ring of integers of a number field is ``stably generic'' 
rather \emph{a priori} (Remark \ref{NT}). Therefore, Theorem \ref{thmChebSG} tells that the Chebotarev law in a 3-manifold implies more than what it does in number theory.\\ 

McMullen proved that sequences of knots $(K_i)=(K_i)_{i \in \N_{>0}}$ given in Examples \ref{eg} below obey the Chebotarev law \cite[Theorems 1.1, 1.2]{McMullen2013CM}. 
The union $\mca{K}=\cup_i K_i$ of such $(K_i)$ is a stable generic link by 
our Theorem \ref{thmChebSG}. 
\begin{egs} \label{eg}
(1) Let $X$ be a closed surface of constant negative curvature, let $M=T_1(X)$ denote the unit tangent bundle, and let $(K_i)$ denote the closed orbits of the geodesic flow in $M$, ordered by length. 

(2) Let $(K_i)$ be the closed orbits of any topologically mixing pseudo-Anosov flow on a closed 3-manifold $M$, ordered by length in a generic metric. 
(We consult \cite{Fenley2008, 
DCalegari2007book} for terminology and basic facts related to pseudo-Anosov flows.)  

\end{egs} 
In \cite{McMullen2013CM}, in order to connect symbolic dynamics to finite branched covers, he 
proved an important lemma that assures that every conjugacy class of  $G$ is presented by a closed orbit, and invoked the notion of a Markov section. 
Then he applied a Chebotarev law for dynamical setting, which was proved by Parry--Pollicott \cite[Theorem 8.5]{ParryPollicott1990} with use of a method of Artin $L$-functions. 
We note that special cases of (1) and (2) were initially proved to obey the Chebotarev law by Adachi and Sunada in \cite[Proposition II-2-12]{Sunada1984ASPM} and \cite[Proposition C]{AdachiSunada1987JFA}, the latter being related to topological entropy. 

In Section 4, we examine an example contained in (2) called \emph{the planetary link} $\mca{K}$ of a fibered hyperbolic finite link $L$ in $S^3$, that is, the periodic orbits of the suspension flow of the monodromy map of $L$. 
By virtue of McMullen's theorem \cite[Theorem 1.2]{McMullen2013CM} together with the Nilsen--Thurston uniformization theorem (\cite[Theorem 0.1]{WPThurston1986HS2}), the union $\mca{K}\cup L$ obeys the Chebotarev law, if ordered by length (Proposition \ref{eg2}). 
For any finite sublink $L'\subset \mca{K}$, the union $L\cup L'$ 
is again hyperbolic (Proposition \ref{hyperbolic}). 
Moreover, such $\mca{K}\cup L$ is \emph{stably Chebotarev} (Proposition \ref{sC}).

In addition to the Chebotarev law, the planetary link sometimes has another very noteworthy property. 
Ghrist and others proved that if a link $L$ belongs to a certain large class of links containing the figure-eight knot, then the planetary link of $L$ contains every links 
\cite{Ghrist1997, GhristHolmesSullivan1997book, GhristKin2004}. 
Hence we have a sequence $(K_i)$ of knots containing every isotopy class of links and obeying the Chebotarev law (Proposition \ref{eg3}). 
Moreover, as formulated by Kopei in \cite{Kopei2006}, this example satisfies an analogue of the product formula $|a|_\infty \prod_p |a|_p=1$ $(a\in \ol{\Q})$, where $p$ runs through all the prime numbers and $|a|_p$ denote the $p$-adic norm with $|p|_p=p^{-1}$.  
Therefore, the planetary link would give a fundamental setting, when we establish an analogue of number theory on 3-manifolds. 

In Section 5, we further study non-Galois covers, and observe an example of Chebotarev phenomena in an analogue of a quintic field. 
We first prove the coincidence of the decomposition type of a knot and the cycle type of the monodromy permutation (Proposition \ref{Artin}), which is an analogue of Artin's argument in \cite{Artin1923MA}. 
Then we examine the density of knots in a Chebotarev link of each decomposition type in a degree 5 subcover of an $A_5$(icosahedral)-cover. \\

For a sequence of knots ordered by length and obeying the Chebotarev law, we may define analogues of Artin $L$-functions 
(cf.~\cite{
Sunada1984ASPM, 
AdachiSunada1987JFA, Sunada1988, ParryPollicott1990}). 
In addition, Mihara's refinement allows us to study analogues of ray class fields. 
We expect that Theorem \ref{thmChebSG} 
would play a key role to expand an analogue of id\`{e}lic class field theory for 3-manifolds, in a direction of analytic number theory, with ample interesting examples.  
Another analogue in a more general setting are due to 
J.~Kim (see ver.1 of \cite{JKim2018}) and others \cite{KimMorishitaNodaTerashima2021}. 

\section{Knots in a finite non-Galois cover}

In this section, we carefully observe the behavior of knots in a finite cover which is not necessarily Galois (regular), and obtain the following lemma. 

\begin{lem} \label{1:1} 
Let $h:N\to M$ be a finite (unbranched) cover of 3-manifolds and $K\subset M$ a knot. 

{\rm (1)} Let $K'$ be a connected component of $h^{-1}(K)$ 
in $N$. If the restriction map 
$h|_{K'}:K'\to K$ is a bijection, then the conjugacy classes satisfy $h_*([K'])\subset [K]$ in $\pi_1(M)$. 

{\rm (2)} If $k \in [K]\cap h_*(\pi_1(N)) \neq \emptyset$, then there exists some connected component $K'$ of $h^{-1}(K)$ such that $h|_{K'}:K'\to K$ is a bijection and $k \in h_*([K'])$ holds. 
\end{lem}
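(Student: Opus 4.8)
The plan is to reduce both statements to the standard dictionary between the finite covering $h^{-1}(K)\to K$ and the monodromy action of $\pi_1(M)$ on a fibre. Fix a basepoint $x_0\in K$, set $\pi=\pi_1(M,x_0)$, and let $\kappa\in\pi$ denote the class of the oriented loop $K$, so that $[K]=\{g\kappa g^{-1}\mid g\in\pi\}$. Fix $\wt{x}_0\in h^{-1}(x_0)$ and put $H=h_*(\pi_1(N,\wt{x}_0))\le\pi$, so that the fibre $h^{-1}(x_0)$ is identified with the right coset space $H\backslash\pi$ via the monodromy action $Hg\cdot\gamma=Hg\gamma$. Since $h^{-1}(K)\to K$ is a finite cover of the circle, its connected components correspond bijectively to the orbits of $\langle\kappa\rangle$ on $h^{-1}(x_0)$, where the degree of $h|_{K'}$ for a component $K'$ equals the size of the corresponding orbit. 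In particular $h|_{K'}$ is a bijection if and only if the corresponding coset $Hg$ is a fixed point of $\kappa$, i.e. $Hg\kappa=Hg$, i.e. $g\kappa g^{-1}\in H$. This dictionary underlies both parts.

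For part (1) I would argue that bijectivity of $h|_{K'}$ means $K'$ is a degree-one lift of $K$, so that a based representative $\alpha\in\pi_1(N,\wt{x}_0)$ of the free homotopy class $[K']$ (obtained by joining $\wt{x}_0$ to $K'$ by an auxiliary path) satisfies $h_*(\alpha)\in[K]$, because $h\circ\alpha$ traverses $K$ exactly once. Writing $[K']=\{g\alpha g^{-1}\mid g\in\pi_1(N,\wt{x}_0)\}$ and using that $h_*$ is a homomorphism gives $h_*(g\alpha g^{-1})=h_*(g)\,h_*(\alpha)\,h_*(g)^{-1}$, which lies in $[K]$ since $[K]$ is closed under conjugation. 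Hence $h_*([K'])\subset[K]$.

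For part (2), suppose $k\in[K]\cap H$ and write $k=g_0\kappa g_0^{-1}$ with $g_0\in\pi$. The first step is to locate the right component: because $k\in H$ one checks $Hg_0\cdot\kappa=Hg_0\kappa=H(g_0\kappa g_0^{-1})g_0=Hkg_0=Hg_0$, so $Hg_0$ is a $\kappa$-fixed coset and therefore names a component $K'$ with $h|_{K'}$ a bijection. The second, and more delicate, step is to certify that this $K'$ actually carries the prescribed element $k$, and not merely some conjugate of $\kappa$: here I would lift the specific loop representing $k=g_0\kappa g_0^{-1}$ (namely ``go out along $g_0$, once around $K$, back along $g_0$'') starting at $\wt{x}_0$. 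Since $k\in H$ the lift is a loop $\wt{c}$ at $\wt{x}_0$ representing some $\wt{k}$ with $h_*(\wt{k})=k$; by construction $\wt{c}$ runs out along the lift of $g_0$ to the fixed point $\wt{x}_0\cdot g_0$, once around the lift of $K$ there (which is exactly $K'$, as that point is $\kappa$-fixed), and back. Thus $\wt{c}$ is a basepoint-change of the loop $K'$, so $\wt{k}\in[K']$ and $k=h_*(\wt{k})\in h_*([K'])$, as required.

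The main obstacle is not any single computation but the basepoint bookkeeping in part (2): one must produce a representative of $[K']$ in $\pi_1(N,\wt{x}_0)$ whose image under $h_*$ is the given $k$ itself, rather than an uncontrolled conjugate. Tracking this forces the choice to lift the explicit conjugating loop $g_0$, which is precisely what ties the fixed point $Hg_0$ back to the element $k$; I expect this coordination between the combinatorial (coset/orbit) and the homotopical (path-lifting) descriptions to be the crux, while the case of disconnected $N$ reduces at once to the connected components meeting $h^{-1}(K)$.
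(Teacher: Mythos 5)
Your proposal is correct and follows essentially the same route as the paper: part (1) is the identical projection argument (push a based representative down and use that $h\circ\alpha$ traverses $K$ exactly $\deg(h|_{K'})=1$ times), and part (2) rests, exactly as in the paper, on lifting the explicit loop (path out, once around $K$, back) from the basepoint, invoking the lifting criterion ($k\in h_*(\pi_1(N))$ forces the lift to close up), and decomposing the lift as path--component--reverse path to certify both that $h|_{K'}$ is a bijection and that $k\in h_*([K'])$. The only deviation is that you pre-identify the relevant component through the coset computation $Hg_0\kappa=Hg_0$; the paper instead reads this off directly from uniqueness of lifts, and reserves the coset/monodromy-action formalism you use for its Section 5 (the monodromy permutation behind Proposition \ref{Artin}).
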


We have an analogue of the Hilbert ramification theory for Galois branched covers of 3-manifolds, in which we describe the behavior of prime ideals and knots using the language of fundamental groups (\cite{Ueki1}). 
For a non-Galois cover $h:N\to M$ and a knot $K$ in $M$, 
the covering degrees of the restriction maps $h|_{K'_i}:K'_i\to K$ for components of $h^{-1}(K)=\cup_i K'_i$ does not necessarily coincide with each other. 
In order to prove the theorem, we need to care such a case. 

We first prepare some basics. 
We assume that any 3-manifold $X$ is equipped with a base point $b_X$. 
A \emph{path} $\gamma=\gamma(t)$ in $X$ is a continuous map $\gamma:[0,1]\to X$ or its image. 
We denote \emph{the inverse path} $\ol{\gamma}$ of $\gamma$, which is defined by $\ol{\gamma}(t):=\gamma(1-t)$. 
A \emph{loop} $l=l(t)$ in $X$ is a path satisfying $l(0)=l(1)=b_X$. 

Let $K$ be a knot in a 3-manifold $X$ and let $\gamma$ be a path with $\gamma(0)=b_X$ and $\gamma(1)\in K$. Then a loop $l_\gamma$ is obtained as the composite of $\gamma$, $K$, and $\ol{\gamma}$. Each element $k$ of the conjugacy class $[K]$ of $K$ in $\pi_1(X)$ is presented by such a loop $l_\gamma$ for some $\gamma$. 

If $h:N \to M$ is a cover of 3-manifolds, we assume $h(b_N)=b_M$ 
so that there is a natural injective homomorphism $h_*: \pi_1(N)\to \pi_1(M)$. 
Let $g:(Z,b_Z)\to (M,b_M)$ be a continuous map from a connected compact manifold with a base point.  
A continuous map $\wt{g}:Z\to N$ satisfying $h\circ \wt{g}=g$ is called a \emph{lift} of $g$ with respect to $h$. 
We have a well-known lifting criterion (See \cite[Propositions 1.33, 1.34]{HatcherAT}): 

\begin{lem}
\label{lifting} Let $h:(N,b_N)\to (M,b_m)$ be a (unbranched) cover of 3-manifolds (here we explicitly write the base points) and let $g:(Z,b_Z)\to (M,b_M)$ be a continuous map from a connected compact manifold with a base point. 
Then there exists a lift $\wt{g}$ of $g$ if and only if $g_*(\pi_1(Z))< h_*(\pi_1(Y))$ holds. For each $b\in h^{-1}(b_M)$, 
a lift $\wt{g}$ satisfying $\wt{g}(b_Z)=b$ is unique. 
\end{lem}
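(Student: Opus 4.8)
The plan is to establish this through the standard covering-space machinery, namely the unique path-lifting and homotopy-lifting properties that $h$ enjoys as a covering map. Since $Z$ is a manifold it is connected and locally path-connected, which is all that the criterion of \cite[Propositions 1.33, 1.34]{HatcherAT} requires; the point is to recall its proof in the present setting. \emph{Necessity} is pure functoriality: if a lift $\wt{g}$ exists, then $h\circ\wt{g}=g$ yields $g_*=h_*\circ\wt{g}_*$ on fundamental groups, whence $g_*(\pi_1(Z))\subset h_*(\pi_1(N))$.

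For \emph{sufficiency}, I would build $\wt{g}$ directly. Fix a target basepoint $b\in h^{-1}(b_M)$. For each $z\in Z$ choose a path $\alpha$ from $b_Z$ to $z$; then $g\circ\alpha$ is a path in $M$ starting at $b_M$, which by path-lifting admits a unique lift to a path in $N$ starting at $b$, and I set $\wt{g}(z)$ to be its endpoint. The crux is \emph{well-definedness}, and this is exactly where the hypothesis $g_*(\pi_1(Z))\subset h_*(\pi_1(N))$ is used. Given a second path $\beta$ from $b_Z$ to $z$, the concatenation $\alpha\cdot\ol{\beta}$ is a loop at $b_Z$, so $g\circ(\alpha\cdot\ol{\beta})$ represents a class in $g_*(\pi_1(Z))\subset h_*(\pi_1(N))$, hence is homotopic rel endpoints to $h\circ\delta$ for some loop $\delta$ at $b$ in $N$. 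Lifting this homotopy shows the lift of $g\circ(\alpha\cdot\ol{\beta})$ starting at $b$ closes up, which forces the lifts of $g\circ\alpha$ and $g\circ\beta$ to share the same endpoint. Thus $\wt{g}(z)$ is independent of the chosen path.

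\emph{Continuity} is then a local matter: near any $z_0$ choose an evenly covered neighborhood $V\ni g(z_0)$ and, using local path-connectedness of $Z$, a path-connected neighborhood $W\ni z_0$ with $g(W)\subset V$; on $W$ the map $\wt{g}$ coincides with the composite of $g|_W$ and the sheet of $h^{-1}(V)$ through $\wt{g}(z_0)$, and is therefore continuous. By construction $h\circ\wt{g}=g$ and $\wt{g}(b_Z)=b$. \emph{Uniqueness} follows from unique path-lifting: two lifts agreeing at $b_Z$ agree along every lifted path, and since $Z$ is connected they coincide everywhere. I expect well-definedness to be the only substantive step, as it is the sole place the subgroup hypothesis is invoked; continuity and uniqueness are routine consequences of local triviality and connectedness.
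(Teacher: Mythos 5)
Your proof is correct and follows precisely the standard argument behind the lifting criterion; the paper itself gives no proof of this lemma, citing Hatcher's Propositions 1.33--1.34, whose proofs your write-up reproduces faithfully (functoriality for necessity, path-lifting with the subgroup hypothesis for well-definedness, evenly covered neighborhoods for continuity, and unique path-lifting plus connectedness of $Z$ for uniqueness). The only point worth flagging is that the statement's $h_*(\pi_1(Y))$ is a typo for $h_*(\pi_1(N))$, which you correctly read as such.
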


\begin{proof}[\textbf{Proof of Lemma \ref{1:1}}] (1) Let $k' \in [K']$ and let $\gamma'$ be a path in $M$ with $\gamma'(0)=b_N$ and $\gamma'(1) \in K'$ such that the composite loop $l_{\gamma'}$ of $\gamma'$, $K$, and $\ol{\gamma'}$ presents $k'$. 
Put $\gamma:=h\circ \gamma'$. Then we have $\gamma(0)=b_M$ and $\gamma(1)\in K$ holds, and the composite loop $l_\gamma$ of $\gamma$, $K$, $\ol{\gamma}$ presents some $k\in [K]$. 
If $f$ is the covering degree of $h|_{K'}:K'\to K$, then $h_*(k')=k^f$ holds. 
By the assumption we have $f=1$, and hence $h_*(k')=k$ holds. Since any conjugate of $k'$ maps to a conjugate of $k$, we have $h_*([K'])\subset [K]$. 

(2) Let $k\in [K]\cap h_*(\pi_1(N))$. Then there is a path $\gamma$ in $M$ with $\gamma(0)=b_M$ and $\gamma(1)\in K$ such that the composite loop $l_\gamma$ of $\gamma$, $K$, and $\ol{\gamma}$ presents $k$. 
Let $\wt{l}_\gamma$ denote the lift of $l_\gamma$ with $\wt{l}_\gamma(0)=b_N$. 
Since $k\in h_*(\pi_1(N))$, the lifting criterion (Lemma \ref{lifting}) assures that $\wt{l}_\gamma$ is again a loop. 
Let $\wt{\gamma}$ denote the lift of $\gamma$ with $\wt{\gamma}(0)=b_N$ and 
let $K'$ denote the connected component of $h^{-1}(K)$ such that $\wt{\gamma}(1)\in K'$. 
Then $\wt{l}_\gamma$ is the composite loop of $\wt{\gamma}$, $K'$, and $\ol{\wt{\gamma}}$. Therefore the restriction map $h|_{K'}:K'\to K$ is a bijection 
and $k\in h_*([K'])$ holds.
\end{proof}

\section{Proof of the theorem} 
In order to prove Theorem \ref{thmChebSG}, we first introduce the notion of a weakly Chebotarev link: 

\begin{dfn} Let $M$ be a 3-manifold and $\mca{K}=\cup_{i\in \N_{>0}} K_i$ a countable link. The link $\mca{K}$ in $M$ is said to be  \emph{weakly Chebotarev} if for any surjective homomorphism $\rho:\pi_1(M)\surj G$ to any finite group, any conjugacy class $C$ of $G$ is the image $\rho([K_i])$ of the conjugacy class $[K_i] \subset \pi_1(M)$ of some component $K_i$ of $\mca{K}$. 
\end{dfn}

\begin{lem} \label{WCG}
If a link $\mca{K}=\cup_i K_i$ in a 3-manifold $M$ is weakly Chebotarev, then $H_1(M)$ is generated by components of $\mca{K}$.
\end{lem}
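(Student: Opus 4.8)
The plan is to argue by contradiction, using the abelianization to convert the defining property of a weakly Chebotarev link into a statement about $H_1(M)$. Let $N\leq H_1(M)$ denote the subgroup generated by the homology classes of the components $K_i$ of $\mca{K}$, where each knot is identified with its image under the Hurewicz map $\pi_1(M)\surj H_1(M)$. The assertion to be proved is exactly $N=H_1(M)$, so I would assume $N\neq H_1(M)$ and seek a contradiction.

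First I would record that $H_1(M)$ is finitely generated: since $M$ is the complement of a finite link in a closed oriented connected 3-manifold, it deformation retracts onto a compact manifold with boundary, whose first homology is finitely generated. Consequently, if $N\neq H_1(M)$, the nontrivial finitely generated abelian quotient $H_1(M)/N$ admits a surjection onto a finite cyclic group $\Z/n\Z$ with $n\geq 2$; fix such a surjection $\bar\phi:H_1(M)/N\surj \Z/n\Z$.

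Next I would compose $\bar\phi$ with the canonical surjections $\pi_1(M)\surj H_1(M)\surj H_1(M)/N$ to obtain a surjective homomorphism $\rho:\pi_1(M)\surj G:=\Z/n\Z$ onto a finite group. By construction $\rho$ annihilates $N$, so $\rho([K_i])=0$ for every $i$. On the other hand, $G$ is abelian, so every singleton is a conjugacy class; applying the weakly Chebotarev hypothesis to the class $C=\{1\}\subset G$ yields some component $K_i$ with $\rho([K_i])=1\neq 0$. This contradicts $\rho([K_i])=0$, whence $N=H_1(M)$, i.e.\ $H_1(M)$ is generated by the components of $\mca{K}$.

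I expect no serious obstacle in this argument; the only point needing a little care is the reduction to a finite quotient, namely ensuring that the abelian group $H_1(M)/N$, when nontrivial, genuinely surjects onto a nontrivial finite cyclic group—for which finite generation of $H_1(M)$ is the clean sufficient hypothesis. The conceptual content is simply that, since conjugacy classes in an abelian group are singletons, the weakly Chebotarev property applied to the cyclic quotients of $H_1(M)$ forces every homology class to be realized by the knots $K_i$.
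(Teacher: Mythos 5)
Your proof is correct and is essentially the paper's own argument: the paper likewise assumes the contrary and produces a surjection $\pi_1(M)\surj H_1(M)\surj \Z/p\Z$ killing every class $[K_i]$, contradicting the weakly Chebotarev property applied to a nonzero conjugacy class. Your write-up merely makes explicit the details the paper leaves implicit (finite generation of $H_1(M)$ and the existence of the finite cyclic quotient), and uses $\Z/n\Z$ in place of $\Z/p\Z$, which is an immaterial difference.
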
 

\begin{proof}  
Otherwise there is a surjective homomorphism $\pi_1(M)\surj H_1(M)\surj \Z/p\Z$ for some prime number $p$ such that the image of any conjugacy class $[K_i]$ is zero, being contradiction. 
\end{proof}

The following is a key lemma to prove the theorem. 

\begin{lem} \label{WCihnerit}
If $\mca{K}=\cup_i K_i$ in $M$ is weakly Chebotarev, 
then for any finite (unbranched) cover $h:N\to M$, the preimage $\mca{K}'=\cup_j K'_j$ of $\mca{K}$ is again weakly Chebotarev. 
\end{lem}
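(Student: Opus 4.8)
The plan is to descend the given finite quotient of $\pi_1(N)$ to a finite quotient of $\pi_1(M)$, exploit the weakly Chebotarev property of $\mca{K}$ downstairs, and then transport the resulting knot back up to $N$ by means of Lemma \ref{1:1}. Write $\Gamma=\pi_1(M)$ and $H=h_*(\pi_1(N))$, a subgroup of finite index $d$ (the covering degree). Fix a surjection $\rho':\pi_1(N)\surj G'$ and a conjugacy class $C'\subset G'$; the task is to exhibit a component $K'_j$ of $\mca{K}'$ with $\rho'([K'_j])=C'$.

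First I would build an auxiliary finite quotient of $\Gamma$. Regard $L:=\Ker\rho'$, via $h_*$, as a finite-index subgroup of $\Gamma$ contained in $H$, and let $L_0$ be its normal core in $\Gamma$ (the intersection of all $\Gamma$-conjugates of $L$), which is normal of finite index. Set $G:=\Gamma/L_0$ and let $\rho:\Gamma\surj G$ be the quotient. The point of this construction is twofold: since $L_0\subseteq H$ one has $\rho^{-1}(\rho(H))=H$, so that \emph{membership in $H$ is detected by $\rho$}; and since $L_0\subseteq L=\Ker\rho'$, the map $\rho'$ factors as $\rho'=\bar\phi\circ(\rho|_H)$ for a surjection $\bar\phi:\rho(H)\surj G'$ (after the identification $\pi_1(N)\cong H$).

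Next I would track the conjugacy class. Pick $c'\in C'$ and an element $a\in H$ with $\rho'(a)=c'$, and set $g_0:=\rho(a)\in\rho(H)\subseteq G$ and $C:=[g_0]_G$. Applying the weakly Chebotarev hypothesis to $\rho$ and $C$ yields a component $K_i$ of $\mca{K}$ with $\rho([K_i])=C$. Because $\rho$ is surjective, the image of a conjugacy class is again a single conjugacy class, namely $\rho([K_i])=[\rho(k)]_G$ for any $k\in[K_i]$; hence some $k\in[K_i]$ satisfies $\rho(k)=g_0$. As $g_0\in\rho(H)$ and $H$ is saturated, this forces $k\in H=h_*(\pi_1(N))$, so $k\in[K_i]\cap h_*(\pi_1(N))\neq\emptyset$.

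Finally I would invoke Lemma \ref{1:1}(2): it produces a component $K'$ of $h^{-1}(K_i)\subseteq\mca{K}'$ with $h|_{K'}:K'\to K_i$ a bijection and $k=h_*(k')$ for some $k'\in[K']$. Then $\rho'(k')=\bar\phi(\rho(h_*(k')))=\bar\phi(g_0)=c'$, and since $\rho'$ is surjective the image $\rho'([K'])$ is the full conjugacy class $[c']_{G'}=C'$. Thus $K'$ is the desired component and $\mca{K}'$ is weakly Chebotarev. I expect the main obstacle to be the auxiliary construction of $G$: one must arrange that $H$ is a union of fibers of $\rho$ (this is precisely what passing to the normal core buys), for otherwise the knot handed down by the weakly Chebotarev property of $\mca{K}$ need not have any conjugate lying in $h_*(\pi_1(N))$, and Lemma \ref{1:1}(2) could not be applied.
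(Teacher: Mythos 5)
Your proof is correct and follows essentially the same route as the paper's: pass to the normal core of the kernel inside $\pi_1(M)$ (the paper's $\Gamma$ is exactly your $L_0$), apply the weakly Chebotarev hypothesis to the resulting finite quotient of $\pi_1(M)$, and transport the knot so obtained back to $N$ via Lemma \ref{1:1} (2). The only real difference is that where the paper realizes the class $C$ geometrically by a loop $L'$ in $N$ and pushes it down to $M$ with Lemma \ref{1:1} (1), you pick an element $a\in h_*(\pi_1(N))$ algebraically and use the saturation $\rho^{-1}(\rho(H))=H$ coming from $L_0\subseteq H$ — a slight streamlining of the same idea, since the paper's step ``$k=h_*(l)g^{-1}\in h_*(\pi_1(N))$ because $g\in\Gamma$'' exploits precisely the same containment of the core in $h_*(\pi_1(N))$.
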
 

\begin{proof} 
Let $\rho:\pi_1(N)\surj G$ be any surjective homomorphism onto a finite group and let $C$ be any conjugacy class of $G$. We will prove that there exists some $K'_j \subset N$ 
satisfying $C=\rho([K'_j])$. 
Let $L'$ be a loop in $N$ with $C=\rho([L'])$. 
We may assume that the restriction map $h|_{L'}$ is injective, by moving $L'$ a little if necessary. 
If we put $L:=h(L')$, then we have $h_*([L'])\subset [L]$ by Lemma~\ref{1:1}~(1). 
Let $\Gamma$ denote the maximal normal subgroup of $\pi_1(M)$ contained in $\Ker(\rho)$.  Then $\Gamma$ is again of finite index. 
Let $q:\pi_1(M) \surj \pi_1(M)/\Gamma$ and $q':\pi_1(N) \surj \pi_1(N)/\Gamma$ denote the quotient maps. 
Since $\mca{K}=\cup K_i$ is weakly Chebotarev and $q$ is a surjective homomorphism onto a finite group, we have $q([L])=q([K_i])$ for some $K_i$. 
Let $l \in [L']$. Since $h_*([L'])\subset [L]$, we have $h_*(l)=kg$ for some $k\in [K_i]$ and $g\in \Gamma$. 
Since $k=h_*(l)g^{-1}\in h_*(\pi_1(N))$, by Lemma \ref{1:1} (2), 
there exists some connected component $K'_j$ of $h^{-1}(K_i)$ such that 
the restriction $h|_{K'_j}:K'_j\to K$ is a bijection and $k \in h_*([K'_j])$ holds. 
Hence $q'(l')$ is a common element of $q'([L'])$ and $q'([K'_j])$ in $\pi_1(N)/\Gamma$. 
Since $[L']$ and $[K'_j]$ are conjugacy classes in $\pi_1(N)$, so are $[L']$ and $[K'_j]$ in $\pi_1(N)/\Gamma$. 
Therefore we have 
$q'([L'])=q'([K'_j])$ in $\pi_1(N)/\Gamma$. Since $\rho$ factors through $q'$, we obtain $C=\rho([L'])=\rho([K'_j])$~in~$G$. 
\end{proof}

\begin{proof}[\textbf{Proof of Theorem \ref{thmChebSG}}] 
Suppose that a sequence $(K_i)_{i\in \N_{>0}}$ of knots in a 3-manifold $M$ satisfies the Chebotarev law. 
Let $n\in \N_{>0}$ and put $L_n=\cup_{i\leq n} K_i$. 
Then the link $\mca{K}-L_n$ in $M-L_n$ is weakly Chebotarev. 

Let $h:N\to M$ be any finite branched cover branched along some finite sublink $L$ of $\mca{K}$ in $M$, and let $L'$ be any finite sublink of the preimage $h^{-1}(\mca{K})$ in $N$. Then the union $L\cup h(L')$ is contained in $L_n$ for some $n$. 
Put $X=M-L_n$ and $Y=N-h^{-1}(L_n)$, and let $h:Y\to X$ denote the restriction of $h$ to the exteriors. 
Since $\mca{K}-L_n$ in $M-L_n$ is weakly Chebotarev, by Lemma \ref{WCihnerit}, the preimage $h^{-1}(\mca{K}-L_n)$ in $Y$ is again weakly Chebotarev. 
By Lemma \ref{WCG}, components of $h^{-1}(\mca{K}-L_n)$  generates $H_1(Y)$, and hence components of $h^{-1}(\mca{K})-L'$ generates $H_1(N)$. 
Therefore $h^{-1}(\mca{K})$ is a generic link of $N$, and 
$\mca{K}$ is a stably generic link. 
\end{proof} 


We remark that Theorem 3 was initially claimed by McMullen in his e-mail, with an idea of the proof above. 
Mihara also gave several essential comments to refine the proof.

\begin{rem} \label{NT} Let us examine the condition of ``stably generic'' in number theory. 
Let $k$ be any number field and $S$ a finite set of prime ideals of the ring $\mca{O}_k$ of integers of $k$. 
Then by virtue of \cite[Chapter VI, Theorems 6.6 and 7.1]{Neukirch} connecting ideal theoretic and id\`ele theoretic class field theories via the notion of ray class groups, 
the Galois group $\Gal(k_S^{\rm ab})$ of the maximal abelian extension of $k$ unramified outside $S$ is topologically generated by the set of images of (the Frobenius elements of) prime ideals outside $S$, 
namely, the set ${\rm Spec}\mca{O}_k$ of prime ideals  of $\mca{O}_k$ is ``generic''. 
Since this condition holds for any finite extension $F/k$, we see without any additional argument that the set ${\rm Spec}\mca{O}_k$ is ``stably generic''. 
This fact is based only on an algebraic side of number theory, apart from an analytic side containing the Chebotarev law. In this sense, Theorem \ref{thmChebSG} tells that the Chebotarev law in a 3-manifold implies more than what it does in number theory.  
\end{rem}

\section{The planetary link}

In this section, we investigate the planetary link of a fibered hyperbolic finite link $L$ in $S^3$, which is an example of Chebotarev link, and ask several questions. 
Typical examples of fibered hyperbolic links are the figure eight knot, the Whitehead link, and the Borromean ring.

We first define \emph{the planetary link} $\mca{K}$ of a fibered finite link $L$ in $S^3$, following Birman and Williams \cite{BirmanWilliams1983CM}. 

\begin{dfn} A finite link $L$ in $S^3$ is fibered if there is a fibration $f:S^3-L\to S^1$ with fiber the interior of a Seifert surface $\Sigma$. This induces a self diffeomorphism $\varphi$ on $\Sigma$ called the monodromy. The suspension flow of $\varphi$ on $S^3-L$ is equivalent to that obtained by integrating the gradient $\nabla f$. 
By Thurston's classification theorem of surface diffeomorphisms \cite{WPThurston1986HS2}, we may speak of ``the'' fibration of $L$. 
\emph{The planetary link} $\mca{K}$ of a fibered finite link $L$ is the set of periodic orbits of this flow. 
\end{dfn} 

The following proposition is a partial generalization of \cite[Theorem 2.3]{NiiboUeki}. It is obtained similarly to the note just below \cite[Corollary 1.3]{McMullen2013CM}, as a corollary of \cite[Theorem~1.2]{McMullen2013CM}. 

\begin{prop}\label{eg2} Let $L$ be any fibered hyperbolic finite link in $S^3$ and let $\mca{K}$ denote the planetary link of $L$. 
Then the union $\mca{K}\cup L$ obeys the Chebotarev law, if ordered by length with respect to a generic metric. 
\end{prop}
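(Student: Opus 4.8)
The plan is to reduce the assertion to McMullen's dynamical Chebotarev theorem \cite[Theorem~1.2]{McMullen2013CM}, following the note just below \cite[Corollary~1.3]{McMullen2013CM}. The first step is to recognize the suspension flow as a pseudo-Anosov flow. Since $L$ is a fibered hyperbolic link, the fibered complement $S^3-L$ is a hyperbolic 3-manifold, so by Thurston's classification theorem \cite[Theorem~0.1]{WPThurston1986HS2} the monodromy $\varphi$ is isotopic to a pseudo-Anosov homeomorphism of the fiber. Hence the suspension flow on $S^3-L$ is a pseudo-Anosov flow, and by definition its set of periodic orbits is the planetary link $\mca{K}$.

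The second step is to arrange the hypotheses of \cite[Theorem~1.2]{McMullen2013CM}, namely topological mixing together with an ordering by length. Here the choice of a generic metric is essential: the constant-roof suspension is never topologically mixing, but when the flow is reparametrized by arc length in a generic metric on $S^3$ the periods of its closed orbits cease to lie in a common discrete subgroup $c\Z\subset\R$, and the resulting pseudo-Anosov flow becomes topologically mixing. I would therefore fix such a generic metric, use it both to make the flow mixing and to order the orbits by length, and then invoke McMullen's theorem to conclude that the periodic orbits $\mca{K}$ obey the Chebotarev law.

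The third step is to incorporate $L$ and to transport McMullen's count from the flow manifold $S^3-L$ to the closed ambient $S^3$. Adjoining the finitely many components of $L$ to $\mca{K}$ does not change any natural density, so it suffices to track the role of $L$ in the covering data: once an initial segment $L_n$ in the length ordering contains all of $L$, the group $\pi_1(S^3-L_n)$ is the fundamental group of $S^3-L$ with finitely many further orbits removed, and its finite quotients correspond to the finite covers of the flow manifold on which McMullen's symbolic-dynamical density is computed. Under this correspondence the limit defining the Chebotarev law for $\mca{K}\cup L$ agrees with the one furnished by \cite[Theorem~1.2]{McMullen2013CM}, and equals $\#C/\#G$ for every conjugacy class $C\subset G$.

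I expect the main obstacle to be this last identification: \cite[Theorem~1.2]{McMullen2013CM} is phrased for a flow on a closed 3-manifold, whereas the suspension flow lives on the open manifold $S^3-L$, so one must make precise the sense in which covers of $S^3$ branched over $L$ (equivalently, unbranched covers of $S^3-L$) realize McMullen's Markov-section and Parry--Pollicott machinery. Carrying this out is exactly the content of the note below \cite[Corollary~1.3]{McMullen2013CM}, and once the dictionary between branched covers of $(S^3,L)$ and the dynamical data is in place the proposition follows directly.
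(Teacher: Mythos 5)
Your overall route (Thurston's hyperbolization theorem to make the monodromy pseudo-Anosov, then McMullen's \cite[Theorem 1.2]{McMullen2013CM} via the note below his Corollary 1.3) is the same as the paper's, and your second step is a genuinely careful point: the constant-roof suspension is never topologically mixing, and the generic-metric reparametrization is what restores the mixing hypothesis of Theorem 1.2 --- a subtlety the paper's own proof passes over in silence. But your third step contains the real gap, and you name it yourself without closing it. You keep the flow on the open manifold $S^3-L$ and propose to ``transport McMullen's count'' to $S^3$ through a correspondence between finite quotients of $\pi_1(S^3-L_n)$ and covers of the flow manifold; when you hit the mismatch --- Theorem 1.2 is stated for flows on \emph{closed} 3-manifolds --- you declare that resolving it ``is exactly the content of the note below Corollary 1.3.'' That is circular: the proposition being proved is precisely a mild generalization of that note, so invoking it as a black-box dictionary is not a proof. (The same confusion shows up earlier: in your second step you reparametrize by a generic metric \emph{on $S^3$} at a stage where your flow is still only defined on $S^3-L$.)

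The paper closes this gap with one concrete observation that your write-up never makes: since the boundary of the fiber $\Sigma$ is invariant (though not pointwise fixed) under the monodromy $\varphi$, the suspension flow on $S^3-L\cong {\rm Int}M_\varphi$ extends naturally to a flow on the closed manifold $S^3$, with the components of $L$ themselves becoming closed orbits. After this extension there is nothing left to transport: \cite[Theorem 1.2]{McMullen2013CM} applies verbatim to a pseudo-Anosov flow on the closed manifold $S^3$, whose set of closed orbits is exactly $\mca{K}\cup L$, and the Chebotarev law for $\mca{K}\cup L$ follows at once. This is also why $L$ belongs in the statement: the law for $\mca{K}\cup L$ demands equidistribution in covers of $S^3$ branched over components of $L$, which you get because $L$ consists of closed orbits of the flow to which McMullen's theorem is applied; your remark that ``adjoining finitely many components does not change any natural density'' handles only the harmless index-shifting and would suffice only if you already had the Chebotarev law for $\mca{K}$ in the open ambient manifold $S^3-L$ --- which is exactly what Theorem 1.2 does not give and what you left unproved.
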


\begin{proof}
A celebrated theorem by W.~P.~Thurston \cite[Theorem 0.1]{WPThurston1986HS2} asserts that for an automorphism $\varphi$ of a compact surface $\Sigma$ of negative Euler characteristic, the interior of the mapping torus $M_\varphi=\Sigma\times \R/(x,t)\sim (\varphi(x),t+1)$ admits a complete hyperbolic structure of finite volume if and only if $\varphi$ is a pseudo-Anosov map.

Since $L$ is a fibered link, the complement $S^3-L$ is homeomorphic to the interior of the mapping torus $M_\varphi$ of the monodromy map $\varphi$ on a Seifert surface $\Sigma$ of $L$. 
Since $L$ is hyperbolic, ${\rm Int}M_\varphi\cong S^3-L$ admits a (unique) complete hyperbolic structure, and  Thurston's theorem assures that $\varphi$ is a pseudo-Anosov map. 
Therefore the suspension of $\varphi$ is a pseudo-Anosov flow on $S^3-L$. 

Since the boundary of $\Sigma$ is stable under $\varphi$ (but not fixed), the flow on $S^3-L$ naturally extends to $S^3$, and $L$ itself becomes a set of closed orbits. 
Since the Chebotarev law persists under Dehn surgeries\footnote{To be more precise, some rational filling yields a pseudo-Anosov so that \cite[Theorem 1.2]{McMullen2013CM} applies and then the $\infty$-surgery results an expected link in $S^3$.}, now \cite[Theorem 1.2]{McMullen2013CM} asserts that the set $(K_i)$ of closed orbits of the flow on $S^3$ obeys the Chebotarev law, if ordered by length 
with respect to a generic metric. 
\end{proof}

The following Proposition is obtained in a similar way to Miller \cite[Proposition 4.2]{Miller2001EM}. 
\begin{prop} \label{hyperbolic} 
Let $L$ be a fibered hyperbolic finite link in $S^3$ and let $\mca{K}$ denote the planetary link of $L$. 
Then for any finite sublink $L'$ of $\mca{K}$, the union $L\cup L'$ is again hyperbolic. 
\end{prop}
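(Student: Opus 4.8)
The plan is to exploit the pseudo-Anosov structure on $S^3-L$ and the fact that the components of $\mca{K}$ are closed orbits of the suspension flow. The key geometric input is that closed orbits of a pseudo-Anosov flow are freely homotopic into the ambient manifold in a very rigid way, and in particular cannot bound essential annuli or tori with each other or with the cusps. First I would invoke Thurston's theorem (as in Proposition \ref{eg2}) to record that $S^3-L\cong {\rm Int}M_\varphi$ carries a complete hyperbolic structure of finite volume, so that the only obstructions to hyperbolicity of $S^3-(L\cup L')$ are, by Thurston's hyperbolization for Haken manifolds, the presence of essential spheres, disks, annuli, or tori in the complement.

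Second, I would argue that $S^3-(L\cup L')$ is irreducible and atoroidal. Since $L'\subset\mca{K}$ consists of closed orbits of the flow on $S^3-L$, drilling them out produces a manifold that is again the complement of a geodesic-like collection inside a hyperbolic manifold; the orbits are isotopic to closed geodesics of $S^3-L$, and removing finitely many closed geodesics from a finite-volume hyperbolic 3-manifold preserves hyperbolicity. Concretely, I would appeal to the principle (used by Miller \cite[Proposition 4.2]{Miller2001EM}) that the periodic orbits of a pseudo-Anosov flow are in pairwise ``taut'' position: no two of them cobound an essential annulus, and no single one bounds a compressing disk, because the stable/unstable foliations restrict to singular foliations on any would-be essential surface in a way incompatible with the surface being essential. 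This is where the real content lies.

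The main obstacle will be ruling out essential annuli and tori in the drilled manifold, i.e.\ showing that removing the closed orbits $L'$ does not create a Seifert-fibered or toroidal piece. The cleanest route is to transport the problem to the mapping-torus picture: a closed orbit of the suspension flow corresponds to a periodic orbit of the pseudo-Anosov monodromy $\varphi$ on $\Sigma$, and an essential torus or annulus in the complement of these orbits would descend to an essential object in the punctured surface invariant under (a power of) $\varphi$. Because $\varphi$ is pseudo-Anosov, it admits no invariant essential simple closed curve system even after passing to the fiber punctured at a finite periodic set (the puncturing preserves the pseudo-Anosov property, as the singular foliations extend over the punctures); hence no such torus or annulus exists. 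I would therefore reduce the statement to the fact that puncturing $\Sigma$ at a finite $\varphi$-invariant collection of periodic points yields a surface on which $\varphi$ remains pseudo-Anosov, and then reapply Thurston's hyperbolization criterion.

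Thus the proof structure is: (i) realize $S^3-(L\cup L')$ as the interior of the mapping torus of $\varphi$ restricted to $\Sigma$ punctured at the finite set of periodic points corresponding to $L'$; (ii) check that this punctured map is still pseudo-Anosov, so its mapping torus has negative Euler characteristic fibers with a pseudo-Anosov monodromy; (iii) conclude by Thurston's theorem that the complement is hyperbolic. I expect step (ii)---verifying that puncturing at periodic orbits leaves the pseudo-Anosov property intact and does not produce degenerate thrice-punctured-sphere or annular fibers---to be the crux, and I would handle the low-complexity fiber cases separately, noting that $L$ being hyperbolic already guarantees $\chi(\Sigma)<0$ with enough room to absorb finitely many additional punctures.
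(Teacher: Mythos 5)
Your final plan coincides with the paper's proof: the paper realizes $S^3-(L\cup L')$ as the interior of the mapping torus of $\varphi$ restricted to the fiber $\Sigma$ punctured at the periodic points corresponding to $L'$, asserts that this restriction is still pseudo-Anosov, and applies Thurston's theorem \cite[Theorem 0.1]{WPThurston1986HS2} to conclude hyperbolicity. The exploratory detours through Haken hyperbolization, geodesic drilling, and essential annuli/tori are not needed, and your worry about low-complexity fibers is harmless since puncturing only decreases $\chi(\Sigma)$; your steps (i)--(iii) are exactly the argument in the paper.
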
 
\begin{proof}  
Let $\Sigma$ be a Seifert surface of $L$ and let $\varphi$ denote the monodromy map on $\Sigma$. 
Put $\Sigma':=\Sigma-L'$ and $\Sigma'':=\Sigma-{\rm Int}V_{L'}$, where $V_{L'}$ is a tubular neighborhood of $L'$. 
Since $L'$ consists of periodic orbits of $\varphi$, the map $\varphi$ restricts to $\Sigma'$. 
Let $\psi:\Sigma'\congto {\rm Int}\Sigma''$ be a homeomorphism, and put $\varphi':=\psi\circ \varphi\circ\psi^{-1}$. 
Then $h'$ is a pseudo-Anosov map on the interior of $\Sigma''$, which naturally extends to the boundary of $\Sigma''$. 
Let $M'_\varphi$ and $M''_{\varphi'}$ denote the mapping tori of $\varphi$ and $\varphi'$ acting on $\Sigma'$ and $\Sigma''$ respectively.  
Then we have homeomorphisms among $S^3-L\cup L'$, ${\rm Int}M'_\varphi$, and ${\rm Int}M''_{\varphi'}$. 
Since $\varphi'$ is a pseudo-Anosov map on $\Sigma''$, again by virtue of Thurston's theorem \cite[Theorem 0.1]{WPThurston1986HS2}, the interior ${\rm Int}M''_{\varphi'}$ admits a complete hyperbolic structure, and hence so does $S^3-L\cup L'$. 
\end{proof} 

In what follows we introduce the notion of a stably Chebotarev link, which is slightly stronger to that of a Chebotarev link, 
and give another partial proof of Theorem \ref{thmChebSG}. 

\begin{dfn} We say a sequence $(K_i)_i$ of knots in a 3-manifold $M$ is \emph{stably Chebotarev} if for any finite branched cover $h:N\to M$ branched along some finite sublink $L$ of $\mca{K}=\cup_i K_i$, the preimage $h^{-1}(\mca{K}-L)=\cup_j K'_j$ with some order obeys the Chebotarev law.
\end{dfn} 

\begin{prop} \label{sC} Any link obeying the Chebotarev law in Examples \ref{eg} (2) due to \cite[Theorem 1.2]{McMullen2013CM} is a stably Chebotarev link. 
\end{prop}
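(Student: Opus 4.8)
The plan is to lift the flow to each finite branched cover and invoke McMullen's theorem a second time. Let $\phi$ be a topologically mixing pseudo-Anosov flow on a closed $3$-manifold $M$, let $\mca{K}=\cup_i K_i$ be its set of closed orbits, and let $h:N\to M$ be a finite cover branched along a finite sublink $L\subset\mca{K}$. Since every component of $L$ is a closed orbit, the restriction of $h$ over $M-L$ is an honest finite cover, so the (nowhere-zero) vector field generating $\phi$ on $M-L$ pulls back to $N-h^{-1}(L)$. Near a periodic orbit $\gamma\subset L$ the branching is cyclic around the meridian while the flow runs along the longitude $\gamma$, so the pulled-back field extends across the core $h^{-1}(\gamma)$ and carries the pronged local model of $\phi$ at $\gamma$ to a pronged local model upstairs. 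Thus $\phi$ lifts to a pseudo-Anosov flow $\wt{\phi}$ on the closed $3$-manifold $N$, and since $h$ is flow-equivariant the set of closed orbits of $\wt{\phi}$ is exactly $h^{-1}(\mca{K})$.

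Granting for the moment that $\wt{\phi}$ is again topologically mixing, the conclusion follows cheaply. Because $N$ is closed and $\wt{\phi}$ is a topologically mixing pseudo-Anosov flow, \cite[Theorem 1.2]{McMullen2013CM} applies to $\wt{\phi}$ and shows that its closed orbits $h^{-1}(\mca{K})$ obey the Chebotarev law when ordered by length with respect to a generic metric on $N$. The Chebotarev law is an asymptotic density statement, so deleting the finite set $h^{-1}(L)$ leaves it unchanged; hence $h^{-1}(\mca{K}-L)=h^{-1}(\mca{K})-h^{-1}(L)$ obeys the Chebotarev law, which is exactly the assertion that $\mca{K}$ is stably Chebotarev.

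The heart of the matter, and the step I expect to be the main obstacle, is to prove that $\wt{\phi}$ is topologically mixing, since topological transitivity and mixing are \emph{not} in general inherited by finite covers: a connecting orbit furnished by mixing downstairs may land in the wrong sheet. To handle this I would argue inside McMullen's own symbolic framework. Lifting a Markov section for $\phi$ (as in \cite{McMullen2013CM, ParryPollicott1990}) yields a Markov section for $\wt{\phi}$ whose associated subshift of finite type $\wt{\Sigma}$ is a set-extension of the base subshift $\Sigma$ by the $d$-element sheet set, with roof function $\wt{r}=r\circ(\text{projection})$. The suspension of $\wt{\Sigma}$ with roof $\wt{r}$ is topologically mixing provided $\wt{\Sigma}$ is irreducible and aperiodic and $\wt{r}$ is not cohomologous to a function valued in a discrete subgroup of $\R$.

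Each of these three conditions I would derive from data already present downstairs. Irreducibility of $\wt{\Sigma}$ is the statement that orbit segments realize every sheet, i.e.\ that closed orbits of $\phi$ represent every coset of $h_*\pi_1(N)$ in $\pi_1(M)$; this is precisely McMullen's surjectivity lemma \cite{McMullen2013CM}, applied to the finite quotient $\pi_1(M)/\Gamma$ for $\Gamma$ the normal core of $h_*\pi_1(N)$. Aperiodicity is inherited, since an irreducible extension of the aperiodic base $\Sigma$ has period dividing that of $\Sigma$. Finally, the non-lattice condition on $\wt{r}$ is inherited from that on $r$ (the reason $\phi$ is mixing): a lattice coboundary relation for $\wt{r}$ would push down, via summation over sheets, to one for $r$, contradicting mixing of $\phi$. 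Conceptually this last point is the dense-period criterion for pseudo-Anosov flows (cf.~\cite{Fenley2008}) transported to the cover, and it is where I would expect the technical care to concentrate; assembling the three ingredients yields mixing of $\wt{\phi}$ and completes the proof.
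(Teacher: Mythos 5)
Your proposal takes essentially the same route as the paper's proof, which consists of exactly your first two paragraphs: since $L$ consists of closed orbits, the topologically mixing pseudo-Anosov flow lifts to $N-h^{-1}(L)$ and extends across the branch locus, and then \cite[Theorem 1.2]{McMullen2013CM} is applied a second time to the resulting flow on the closed manifold $N$, ordered by length. The issue you single out as the heart of the matter --- that topological mixing of the lifted flow is a hypothesis of McMullen's theorem and is not automatic for finite covers --- is not addressed in the paper at all; it is simply absorbed into the citation, so your third paragraph is supplementary care rather than a divergence in approach. One slip in that supplementary argument: the claim that an irreducible extension of an aperiodic shift has period dividing that of the base is false (a $\Z/2$-extension of the full $2$-shift by the constant nontrivial cocycle is irreducible of period $2$); this is harmless, however, since mixing of the suspension flow needs only irreducibility together with the non-lattice condition on the roof function --- aperiodicity of the extension shift is not required --- and your sheet-summation push-down of coboundary relations supplies exactly that.
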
 

\begin{proof}
Let $(K_i)$ be a set of knots obeying the Chebotarev law in Examples \ref{eg}. 
Let $h:N\to M$ a finite branched cover branched along a finite sublink $L$ of $\mca{K}:=\cup_i K_i$ and let $h:Y\to X$ denote the restriction to the complement of $h^{-1}(L)$. 
Since $L$ consists of periodic orbits, the topologically mixing pseudo-Anosov flow on $X$ lifts to $Y$ and extends to $N$. 
Since the Chebotarev law persists under Dehn surgeries, now \cite[Theorem 1.2]{McMullen2013CM} asserts that $\mca{K}'$ again obeys the Chebotarev law if it is ordered by length. 
\end{proof} 

Since a Chebotarev link is generic, a stably Chebotarev link is stably generic. Therefore, Proposition \ref{sC} reproves Theorem \ref{thmChebSG} for links given in Example \ref{eg} (2). 

\begin{q} We wonder whether the following are true. 

(1) Any sequence of knots $(K_i)$ in a 3-manifold $M$ obeying the Chebotarev law is stably Chebotarev. 

(2) In addition, let $h:N\to M$ be a finite branched cover. Let $\mca{K}'=\cup K_j$ be the preimage of $\mca{K}=\cup K_i$ with suffix $j\in \N_{>0}$ and denote $h(j)=i$ if $h(K'_j)=K_i$. Then $(K'_j)$ again satisfies the Chebotarev law if 
(i) $(K'_j)$ is ordered so that $k<l \imp h(k)\leq h(l)$ holds, or if 
(ii) $(K'_j)$ is again ordered by length. 
\end{q} 

Birman and Williams \cite{BirmanWilliams1983CM} conjectured that the planetary link of the figure-eight knot does not contain a figure-eight knot. However, Ghrist and others proved that for a link $L$ in a large class of links containing the figure-eight link, the planetary link of $L$ contains every isotopy class of links,  by developing a theory of universal template (\cite[Theorem 4]{Ghrist1997}, \cite[Remark 3.2.20]{GhristHolmesSullivan1997book}, \cite{GhristKin2004}). 
Their class contains the figure-eight link, the Whitehead link, the Borromean ring, and every fibered non-torus 2-bridge  knot. 
Therefore we obtain another generalization of \cite[Theorem 2.3]{NiiboUeki}: 
\begin{prop} \label{eg3} 
There exits a sequence $(K_i)$ of knots containing every isotopy class of links and obeying the Chebotarev law. 
\end{prop}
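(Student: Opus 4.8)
The plan is to assemble Proposition~\ref{eg3} from two ingredients that are already in place by the time we reach it, so that no genuinely new topology is required. First, recall from Proposition~\ref{eg2} that for any fibered hyperbolic finite link $L$ in $S^3$, the union $\mca{K}\cup L$ of the planetary link with $L$ itself obeys the Chebotarev law when ordered by length in a generic metric. Second, invoke the Ghrist--Kin universal template results cited just above the statement: there is a large class of fibered links (containing the figure-eight knot) whose planetary link $\mca{K}$ contains every isotopy class of links as a sublink. The whole proof is then a matter of choosing a single $L$ lying in the intersection of these two hypotheses and observing that the resulting sequence has both properties simultaneously.

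Concretely, I would proceed as follows. First, fix $L$ to be the figure-eight knot, which is fibered, hyperbolic, and belongs to Ghrist's universal class (as recorded in the paragraph preceding the proposition). Let $\mca{K}$ be its planetary link and order the components of $\mca{K}\cup L$ by length with respect to a generic metric, obtaining a sequence $(K_i)_{i\in\N_{>0}}$. By Proposition~\ref{eg2} this sequence obeys the Chebotarev law. Next, by the universal template theorem, every isotopy class of links appears as a sublink of $\mca{K}$, hence as a sublink of $\cup_i K_i$. Since dropping the single extra component $L$ from the sequence changes neither the Chebotarev property (deleting finitely many knots does not affect the natural density computed in Definition~\ref{defCheb}) nor the ``contains every isotopy class'' property, the sequence $(K_i)$ furnishes exactly the object asserted to exist.

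The only point requiring care is the reconciliation of the two orderings. Proposition~\ref{eg2} supplies the Chebotarev law for the length ordering, while the universal template statement is order-agnostic: it is a statement purely about the set of isotopy classes realized, not about any enumeration. So I would emphasize that the ``contains every isotopy class of links'' conclusion is a property of the underlying set $\cup_i K_i$, independent of the chosen order, and therefore coexists freely with the length ordering needed for the Chebotarev law. I expect this to be the main (and essentially the only) subtlety: making explicit that the two cited theorems constrain different aspects of the sequence and hence can be imposed at once on a single $L$.

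I anticipate no serious obstacle, since Proposition~\ref{eg3} is a corollary rather than a theorem with independent content. The substantive work has already been carried out by McMullen (the Chebotarev law for pseudo-Anosov flows, feeding into Proposition~\ref{eg2}) and by Ghrist and collaborators (the universal template). The task is merely to verify that a suitable $L$ satisfies both sets of hypotheses---fibered and hyperbolic for Proposition~\ref{eg2}, and membership in the universal class for the template theorem---and the figure-eight knot does. If one wants a second witness, the Whitehead link or the Borromean rings would serve equally well, as both are explicitly listed among the links in Ghrist's class in the discussion preceding the statement.
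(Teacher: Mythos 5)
Your proposal is correct and matches the paper's own (implicit) argument: the paper derives Proposition \ref{eg3} exactly by combining Proposition \ref{eg2} (Chebotarev law for the planetary link of a fibered hyperbolic link, ordered by length) with the Ghrist--Kin universal template theorem applied to the figure-eight knot, which lies in both classes. Your extra step of deleting $L$ from the sequence is harmless but unnecessary, since $\mca{K}\cup L$ itself already has both required properties.
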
 

Ghrist and Kin conjectured in \cite{GhristKin2004} that if the monodromy of $L$ is ``too twisted'', then the planetary link of $L$ does not contain every link. 
There are several other ways to construct a universal template, giving an infinite link containing every link (e.g., \cite{Kin2000JKTR}). 
Now we would like to (re-)ask the following questions, due to our interests in Arithmetic Topology. 

\begin{q} 
(1) Does the planetary link of any fibered hyperbolic finite link $L$ contain every isotopy class of links? (This is asked in \cite{GhristKin2004}.)

(2) Is there any other example of an infinite link obtained from a universal template and obeying the Chebotarev law? 
\end{q} 

\section{Decomposition type of knots} 

In this section, we extend the study on non-Galois covers in Section 2 to describe the decomposition types of knots in a  group theoretical way,
and apply the Chebotarev law for an analogue of a quintic field, that is, a non-Galois number field extension of degree 5 with the Galois group being $A_5$. 

\begin{dfn}
(1) For each $n\in \N_{>0}$, 
we denote the $n$-th symmetric group by $S_n$ and the $n$-th alternating group by $A_n$. \emph{The cycle type of} $\sigma \in S_n$ is $(f_1,\cdots, f_r)$ if it is the product of disjoint cycles of length $f_1\geq \cdots\geq f_r$.

(2) For a finite cover $h:N\to M$ and a knot $K\subset M$, \emph{the decomposition type} of $K$ in $h$ is $(f_1,\cdots, f_r)$ if the inverse image of $K$ consists of $r$ components as $h^{-1}(K)=\cup_{1\leq i\leq r}K_i$ with $f_i={\rm deg}(h:K_i\to K)$ for $i=1,\cdots, r$ and $f_1\geq \cdots \geq f_r$. 
\end{dfn}

Let $h:(N,b_N)\to (M,b_M)$ be a finite (unbranched) cover, which is not necessarily Galois. 
In order to make our argument clear, we recall an important left action of $\pi_1(M)$ on $h^{-1}(b_M)=\{b_1,\cdots,b_n\}$ with $b_1=b_N$ called the monodromy action, which is different from the natural Galois action on the left defined only if $h$ is Galois.  (A standard reference is \cite[p.68--70]{HatcherAT}.)

For a homotopy class $\gamma$ of paths with fixed endpoints, we denote by $\gamma(0)$ and $\gamma(1)$ the starting point and the terminal point respectively. 
The universal cover $\wt{M}\to M$ is identified with the space of homotopy classes $\gamma$ of paths 
with fixed endpoints and with $\gamma(0)=b_M$. 
Put $H:=h_*(\pi_1(N))<\pi_1(M)$ and identify $N$ with the space $H\backslash \wt{M}$ of right cosets $H\gamma$ with $\gamma \in \wt{M}$. 
For each $b_j$, take $\gamma_j\in \wt{M}$ with $b_j=H\gamma_j$, that is, the lift $\wt{\gamma}_j$ to $N$ with $b_1=\wt{\gamma}_j(0)$ satisfies $b_j=\wt{\gamma}_j(1)$. 
Then \emph{the monodromy action} is defined as the composite of the isomorphism $\pi_1(M)\congto \pi_1(M)^{\rm op}=\pi_1(M);\gamma \mapsto \gamma^{-1}$ and the natural transitive right action $(H\gamma_j)\gamma=H\gamma_j\gamma$ for each $\gamma \in \pi_1(M)$. 
This action induces a homomorphism $\rho:\pi_1(M)\to {\rm Aut}(h^{-1}(b_M))\cong S_n$ called \emph{the monodromy permutation.} 
We have $\rho(\gamma)(b_j)=H\gamma_j\gamma^{-1}$.

Let $K\subset M$ be a knot with $b_M\in K$, put $h^{-1}(K)=\cup_{1\leq i \leq r}K_i$, and let $z\in \pi_1(M)$ denote the homotopy class presented by the oriented loop $K$. 
For any $i$ and $j$ with $b_j\in K_i$, the point $\rho(z)(b_j)$ is the terminal point of the lift of $\gamma_j z^{-1}$ starting at $b_1$, hence $\rho(z)(b_j) \in K_i$ holds. 
Note that the lift  $\kappa=\kappa(t)$ to $N$ of the inverse of the loop $K$ with $\kappa(0)=b_j$ satisfies $\kappa(1)=\rho(z)(b_j)$. 

If $H$ is a normal subgroup of $\pi_1(M)$, on the other hand, \emph{the Galois action} of $\pi_1(M)$ on $N$ is defined to be the natural left action given by 
$\gamma' (H\gamma)=\gamma'H\gamma'^{-1}\gamma'\gamma=H\gamma' \gamma$, 
inducing the identification $\Gal(h)\cong h_*(\pi_1(N))\backslash \pi_1(M) = \pi_1(M)/h_*(\pi_1(N))$. 
For $K,z,b_j$, and $K_i$ being as above, the point $z b_j$ is well-defined and is the terminal point of the lift of $z\gamma_j$ starting at $b_1$, so that $z b_j$ is not necessarily on $K'$. 

We may replace $\wt{M}\to M$ and $\pi_1(M)$ by a finite Galois cover $\wt{h}:W\to M$ factoring through $h:N\to M$ and $G=\Gal(h)$ in the definitions and argument above. 
Now we easily obtain the following proposition, which is an analogue of Artin's argument in \cite{Artin1923MA} (see also \cite[Chapter 16.2, Theorem 2]{Takagi1948book}). 

\begin{prop} \label{Artin} 
Let $\wt{h}:W\to M$ be a finite (unbranched) Galois cover with $G=\Gal(\wt{h})$, and $h:N\to M$ a subcover of degree $n$, which is not necessarily Galois. 
Let $\rho:G\to S_n$ denote the monodromy permutation induced by putting $h^{-1}(b_M)=\{b_1,b_2,\cdots,b_n\}$ with $b_1=b_N$. 

Let $K\subset M$ be a knot and let $z\in G$ be an element of the image of the conjugacy class $[K]\subset \pi_1(M)$ of $K$ under the natural homomorphism $\pi_1(M)\surj \pi_1(M)/\wt{h}_*(\pi_1(W))\cong G$.
Then the cycle type of $\rho(z)$ is $(f_1,\cdots, f_r)$ if and only if the decomposition type of $K$ in $h$ is $(f_1,\cdots, f_r)$. 
\end{prop}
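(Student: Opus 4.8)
The plan is to match the disjoint-cycle decomposition of $\rho(z)$ with the connected components of $h^{-1}(K)$, showing that each component $K_i$ contributes exactly one cycle whose length is the covering degree $f_i=\deg(h|_{K_i}\colon K_i\to K)$. Throughout I work within the conventions fixed just before the statement, so I may assume $b_M\in K$ (if not, conjugate by a connecting path, which only replaces $z$ by another representative of its conjugacy class and does not change the cycle type); the loop $K$ then represents $z$ and the monodromy $\rho(z)$ is read off from the lift $\kappa$ of $\ol{K}$ as recorded above.

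First I would note that $\rho(z)$ preserves the partition of the fiber $h^{-1}(b_M)$ by components: for $b_j\in K_i$ we have $\rho(z)(b_j)\in K_i$, since $\rho(z)(b_j)$ is the terminal point of the lift $\kappa$ of $\ol{K}$ starting at $b_j$, and this lift is confined to the component $K_i$ lying over $K$. This is exactly the observation already made in the paragraph preceding the proposition. Next, because $h|_{K_i}\colon K_i\to K$ is a covering of degree $f_i$ and $b_M\in K$, the set $K_i\cap h^{-1}(b_M)$ consists of precisely $f_i$ points, which are exactly the $b_j$ lying on $K_i$.

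The main step is to show that $\rho(z)$ restricted to $K_i\cap h^{-1}(b_M)$ is a single $f_i$-cycle. Identifying $K\cong \R/\Z$ with $b_M$ at $0$, the connected degree-$f_i$ cover $K_i$ is $\R/\Z$ with covering map $t\mapsto f_i t$, so that $K_i\cap h^{-1}(b_M)=\{0,1/f_i,\ldots,(f_i-1)/f_i\}$; lifting the loop once advances each of these fiber points by $\pm 1/f_i$, hence cyclically permutes them with no shorter period. Equivalently, connectedness of $K_i$ means that the monodromy of the $S^1$-cover $K_i\to K$ acts transitively on its $f_i$-point fiber, and since this monodromy factors through the cyclic group $\pi_1(K)\cong\Z$, a single generator must act as one $f_i$-cycle. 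Thus the $\rho(z)$-orbit of any $b_j\in K_i$ has exactly $f_i$ elements.

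Assembling these over the disjoint union $h^{-1}(b_M)=\bigsqcup_i\bigl(K_i\cap h^{-1}(b_M)\bigr)$, the permutation $\rho(z)$ has exactly $r$ cycles, of lengths $f_1,\ldots,f_r$; sorting in decreasing order identifies the cycle type of $\rho(z)$ with the decomposition type $(f_1,\ldots,f_r)$ of $K$, and since a different representative $z$ in the image of $[K]$ only conjugates $\rho(z)$ in $S_n$, the cycle type is well defined. This yields the equivalence in both directions. The hard part is the third step: one must carefully align the convention ``one traversal of $K$'' with ``one application of $\rho(z)$'' fixed via the lift of $\ol{K}$, and then invoke connectedness of $K_i$ to rule out shorter cycles. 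A disconnected preimage would split into several shorter cycles, so it is precisely the connectedness of each $K_i$ that forces a single cycle of the full length $f_i$.
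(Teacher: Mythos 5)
Your proof is correct and follows essentially the same route as the paper: reduce by conjugation-invariance of cycle type to the case $b_M\in K$ with $z$ represented by the loop $K$, then use the monodromy observation to see that $\rho(z)$ permutes the fiber points on each component $K_i$ in a single cycle of length $f_i=\deg(h|_{K_i})$. The paper states this last step in one line, while you justify it explicitly via the classification of connected covers of $S^1$ (equivalently, transitivity of the cyclic monodromy on a connected cover's fiber), which is a welcome but not substantively different elaboration.
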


\begin{proof} 
Since the cycle type of $\sigma\in S_n$ is stable under conjugate, 
we may assume $b_M\in K$ and that $z$ is presented by the loop $K$. Let $h^{-1}(K)=\cup_{1\leq i\leq r}K_i$. 
The observation of the monodromy permutation above assures that $\rho(z)$ permutes $b_j$'s on each $K_i$ cyclically, and the length of each cycle in the cycle decomposition of $\rho(z)$ coincides with $f_i={\rm deg}(h:K_i\to K)$. 
\end{proof} 

\begin{rem} 
Let $H<G$ denote the subgroup corresponding to $h:N\to M$. 
Suppose that $w/k$ is a finite Galois extension of number fields with $G=\Gal(w/k)$ and $l/k$ is the fixed subextension of $H<G$. Then an analogue of $h^{-1}(b_M)=\{b_1,\cdots,b_n\}$ is the set of conjugates of $l/k$, or more precisely, the set $\Hom_k(l, w)$. 
The monodromy permutation $G\to S_n$ of $l/k$ is defined in a similar way, at least in a group theoretic sense. 
The element $z$ in the proof above is an analogue of the Frobenius element of an unramified prime, generating the decomposition group of the component $K'$ of $\wt{h}^{-1}(K)$ with $b_W \in K'$. 
Noting that the subcover $h':W\to N$ is Galois and using the notion of \emph{the decomposition groups} in the Hilbert ramification theory (\cite[Chapter 5]{Morishita2012}, \cite{Ueki1}), we may describe the proof in a more parallel way to the one in number theory. 
\end{rem}

\begin{eg} \label{egA5}
Let $\mca{K}=\cup_i K_i$ be the planetary link of the figure-eight knot in $S^3$, obeying the Chebotarev law. Let $L$ be a trefoil 
in $\mca{K}$, and put $M=S^3-L$. 
We have a well-known surjective homomorphism $\tau:\pi_1(M)\surj A_5$. 
Let $\wt{h}:W\to M$ denote the corresponding $A_5$-cover. 
(Then the Fox completion $\ol{W}$ of $W$ is a Poincar\'e 3-sphere. cf.~\cite{Rolfsen1990}.) 
Let $H<\Gal(h)\cong A_5$ be any subgroup of index 5, and let $h:N\to M$ denote the corresponding subcover of degree 5. 
The kernel ${\rm Ker}(\rho)$ of the monodromy permutation $\rho:A_5\to S_5$ coincides with the normalizer of $H$ in $A_5$. 
Since $A_5$ is a simple group, ${\rm Ker}(\rho)=\{{\rm id}\}$ holds and $\rho$ is an injection. 
By Proposition \ref{Artin} together with the Chebotarev law applied for the composite 
$\rho':\pi_1(M)\underset{\tau}{\surj} A_5\underset{\rho}{\congto} {\rm Im}(\rho)$, 
the number of elements of $A_5$ of each cycle type and the natural density of $K_i$'s of each decomposition type are given as follows. 

\begin{center}
\begin{tabular}{|c||c|c|c|c|}
\hline (cycle/decomposition) type&(1,1,1,1,1)&(2,2,1)&(3,1,1)&(5)\\ 
\hline \hline number of elements of $A_5$ &1&15&20&24\\ 
\hline 
density of knots in $\mca{K}$ &1/60&1/4&1/3&2/5\\
\hline 
\end{tabular} 
\end{center}
Here, a knot of decomposition type $(1,\!1,\!1,\!1,\!1)$ is totally decomposed and that of $(5)$ is totally inert. 
\emph{The natural density} of $K_i$'s with property $P$ is defined by 
$\ds \lim_{\nu\to \infty}\frac{\#\{i\leq \nu\mid K_i{\rm \ satisfies\ }P\ \}}{\nu}$. 
\end{eg} 

Artin $L$-functions of symbolic flows due to Parry--Pollicott \cite{ParryPollicott1990} are regarded as that of Chebotarev links. 
We may also discuss an analogue of Artin's argument in \cite{Artin1923MA} with use of $L$-functions associated to the setting in Example \ref{egA5}. 

\section*{Acknowledgments} 
I would like to express my sincere gratitude to  Noboru Ito, Curtis T.~McMullen, Tomoki Mihara, Masanori Morishita, Hirofumi Niibo, 
and people who attended my talks in several places 
including CIRM in Luminy, Tambara seminar house in Gunma, Noda, Akita, Kanazawa, Peking, and Osaka for inspiring conversation. 
I am grateful to the anonymous referees and experts of journals for careful reading and sincere comments. 
I also would like to thank Jun'ich Akama at TIE 
for serving a place for research in the summer recess. 


\bibliographystyle{amsalpha}
\bibliography{ju.Cheb3.7.bbl}%



\newpage 

\section*{Corrigendum: `\emph{Chebotarev links are stably generic} \cite{ueki7}'} 
\begin{itemize}
\item The definition of $L_n$ in \cite[Definition 1]{ueki7} ought to be $L_n=\cup_{i\leq n}K_i$, instead of $L_n=\cup_{i\leq n}K_n$. 
\item In the proofs of \cite[Proposition 12 and 15]{ueki7}, just before applying \cite[Theorem 1.2]{McMullen2013CM}, the following sentence ought to be inserted; `Since the Chebotarev law persists under Dehn surgeries'. 
To be more precise, some rational filling yields a pseudo-Anosov so that \cite[Theorem 1.2]{McMullen2013CM} applies and then the $\infty$-surgery results an expected link in $S^3$. An extended argument will appear elsewhere. 
\end{itemize} 

These descriptions are corrected in this version on the arXiv. 

\end{document}